\newcommand{\x}{\mathsf{x}} \newcommand{\W}{\mathsf{W}}
\newcommand{\R}{\mathbb{R}} 
\newcommand{\I}{\mathsf{I}}
\theoremstyle{plain} \newtheorem{theorem}{Theorem}
\newtheorem{proposition}{Proposition} \newtheorem{lemma}{Lemma}
\theoremstyle{definition} \newtheorem{assumption}{Assumption}
\begin{document}

\begin{center}
{\Large
    {\sc Estimation of the Number of Spikes,\\
Possibly Equal, in the High-Dimensional Case}
}
\bigskip

Damien Passemier$^{1}$ \& Jianfeng Yao$^{2}$
\bigskip
%\end{center}

{\it
$^{1}$Department of Electronic and Computer Engineering\\
Hong Kong University of Science and Technology (HKUST)\\
Clear Water Bay, Kowloon, Hong Kong\\
damien.passemier@gmail.com\\
\bigskip
$^{2}${Department of Statistics
and Actuarial Science\\ The University of Hong Kong\\ Pokfulam Road,
Hong Kong\\ jeffyao@hku.hk\\

}}
\end{center}
\bigskip

{\bf Abstract.} Estimating the number of spikes in a spiked model
is an important problem in many areas such as signal
processing. Most of the classical approaches assume a large sample
size $n$ whereas the dimension $p$ of the observations is kept
small. In this paper, we consider the case of high dimension, where
$p$ is large compared to $n$. The approach is based on recent results
of random matrix theory. We extend our previous results to a more
difficult situation where some spikes are equal, and compare our
algorithm to an existing benchmark method.

\smallskip

{\bf Keywords.} Spiked population model, high-dimensional
covariance matrix, random matrix theory, Tracy-Widom law.

\section{Introduction}
The spiked population model has been introduced in \cite{Johnstone},
and appears in many scientific fields. In wireless communications, a
signal emitted by a source is modulated and received by an array of
antennas, and the reconstruction of the original signal is directly
linked to the inference of ``spikes''.  In psychology literature, the
strict factor model is equivalent to the spiked population model, and
the number of factors has a primary importance
\cite{Anderson}. Similar models can be found in physics of mixture
\cite{Nadler1}, \cite{Naes} or population genetics. More precisely, we
consider the following spiked population model for the observed
signals $\mathsf{x}(t)$:
\begin{eqnarray} \mathsf{x}(t)&=&\sum_{k=1}^{q_0} f_k(t)a_k + \sigma
n(t) = \mathsf{A} f(t)+ \sigma n(t)\text{,} \label{model}
\end{eqnarray} where $f(t)=(f_1(t),\ldots,f_{q_0}(t))^* \in
\mathbb{R}^{q_0}$ are $q_0$ independent random signals ($q_0 < p$)
with mean zero and unit variance; $\mathsf{A}=(a_1,\ldots,a_{q_0})$ is
a $p \times q_0$ full rank matrix (mixing weights); and $\sigma \in
\mathbb{R}$ is the unknown noise level, $n(t) \sim
\mathcal{N}(0,\mathsf{I}_p)$ is a $p$-dimensional Gaussian white noise
independent of $f(t)$.

The {\em population covariance matrix} $\Sigma$ of $\x(t)$ equals
$\mathsf{A}\mathsf{A}^* + \sigma^2 \I_p$ and has the spectral
decomposition
\[ \W^* \Sigma \W = \sigma^2 \I_p +
\mbox{diag}(\alpha_1,\dots,\alpha_{q_0},0,\dots,0) \] where $\W$ is an
unknown basis of $\R^p$ and $\alpha_1 \ge \alpha_2 \ge \dots \ge
\alpha_{q_0} > 0$. As in \cite{Passemier}, we rewrite the spectral
decomposition of $\Sigma$ as
\[ \W^* \Sigma \W =
\sigma^2\mbox{diag}(\alpha_1',\dots,\alpha_{q_0}',1,\dots,1)
\text{,}\] with $\alpha_i' = \alpha_i/\sigma^2 + 1$.

Notice that Model \eqref{model} is called a {\em strict factor model} 
in \cite{Anderson}. It should not be confused with those factor model widely used in econometric and finance which have generally a more complex structure. For instance, the term {\em strict factor model} is used in these fields to refer to a noise with a general diagonal covariance, while in the so-called approximate factor model or the dynamic factor model (see e.g. \cite{Chamberlain} and \cite{Forni}), the noise is cross-sectionally correlated unlike
the white noise structure assumed in \eqref{model}, and the {\em
factors} $f(t)$ could be a time-series unlike the i.i.d. structure
assumed in \eqref{model}. In sum, these factor models have a much more
complex structure than the spiked population model \eqref{model}
considered in this paper.

A fundamental inference problem in Model~\eqref{model} is the
determination of the number of spikes $q_0$. Many methods have been
developed, mostly based on information theoretic criteria, such as the
minimum description length (MDL) estimator, Bayesian model selection
or Bayesian Information Criteria (BIC) estimators, see \cite{Wax} for
a review. Nevertheless, these methods are based on asymptotic
expansions for large sample size and may not perform well when the
dimension of the data $p$ is large compared to the sample size $n$. To
our knowledge, this problem in the context of high-dimension appears
for the first time in \cite{Combettes}. Recent advances have been made
using the random matrix theory by \cite{Harding} or Onatski
\cite{Onatski} in economics, and Kritchman \& Nadler \cite{Nadler1} in
chemometric literature.

Several studies have also appeared in the area of signal processing
from high-dimensional data. Everson \& Roberts \cite{Everson} proposed
a method using both random matrix theory (RMT) and Bayesian inference,
while Ulfarsson \& Solo combined RMT and Stein's unbiased risk
estimator (SURE) in \cite{Solo}. In \cite{Nadakuditi} and
\cite{Nadler3}, the authors proposed some estimators using information
theoretic criteria.  Finally in \cite{Nadler2}, Kritchman \& Nadler
constructed an estimator based on the distribution of the largest
eigenvalue (hereafter refereed as the KN estimator). In
\cite{Passemier}, we have also introduced a new method based on recent
results of \cite{Bai-Yao} and \cite{Paul} in random matrix theory. It
is worth mentioning that for high-dimensional time series, an
empirical method for the estimation of the spike number has been
recently proposed in \cite{Qyao1} and \cite{Qyao2}.

In all the cited references above, spikes are assumed to be
distinct. However, we observe that when some of these spikes are close
each other, the estimation problem becomes more difficult and these
algorithms need to be modified. We refer this new situation as the
{\em case with possibly equal spikes} and its precise formulation will
be given in Section~\ref{equal}. The aim of this work is to extend our method
\cite{Passemier} to this new situation and to compare it with the KN
estimator, that is known in the literature as one of the best
estimation methods.

The rest of the paper is organized as follows. In
Section~\ref{results}, the estimation problem of the number of
possibly equal spikes is introduced, and our estimator is then
proposed with a proof for its asymptotic
consistency. Section~\ref{simul} provides simulation experiments to
assess the finite-sample quality of our estimator. In Section~\ref{C},
we analyze the influence of a tuning parameter $C$ used in our
procedure and propose an automatic calibration method of the
parameter. Next, we carry out simulation experiments in
Section~\ref{comp} to compare our method to the benchmark KN
estimator.  Conclusions then follow and the appendix collects all the
proofs.

\section{Main results}
\label{results} The sample covariance matrix of the $n$
$p$-dimensional i.i.d. vectors considered at each time $t$,
$(\x_i=\x(t_i))_{1\le i \le n}$ is
\[ \mathsf{S}_n= \frac{1}{n} \sum_{i=1}^n \mathsf{x}_i
\mathsf{x}_i^*\text{.} \] Denote by $\lambda_{n,1} \ge \lambda_{n,2}
\ge \dots \ge \lambda_{n,p}$ its eigenvalues. Our aim is to estimate
$q_0$ on the basis of $\mathsf{S}_n$. 
We first recall our previous result of \cite{Passemier} in the case of
different spikes. Next, we propose an extension of the algorithm to
the case with possibly equal spikes. The consistency of the extended
algorithm is established.

\subsection{Previous work: estimation with different spikes}

We consider the case where the $(\alpha_i)_{1\le i \le q_0}$ are all
different, so there are $q_0$ distinct spikes. It is assumed in the
sequel that $p$ and $n$ are related so that when $n \rightarrow
+\infty$, $p/n \rightarrow c > 0$. Therefore, $p$ can be large
compared to the sample size $n$ (high-dimensional case).

Moreover, we assumed that $\alpha_1' > \dots > \alpha_{q_0}' >
1+\sqrt{c}$ for all $i \in \{1,\dots,q_0\}$; i.e all the spikes
$\alpha_i$'s are greater than $\sigma^2\sqrt{c}$. For $\alpha \ne 1$,
we define the function
\[\phi(\alpha) = \alpha + \frac{c\alpha}{\alpha-1}\text{.}\] Baik and
Silverstein \cite{BS} proved that, under a moment condition on
$\mathsf{x}$, for each $k \in \{1,\dots,q_0\}$ and almost surely,
\[\lambda_{n,k}\longrightarrow \sigma^2\phi(\alpha_k')\text{.}\] They
also proved that for all $1\le i \le L$ with a prefixed range $L$ and
almost surely,
\[\lambda_{n,q_0+i} \rightarrow b=\sigma^2(1+\sqrt{c})^2 \text{.}\]
The estimation method of $q_0$ in \cite{Passemier} is based on a close
inspection of differences between consecutive eigenvalues
\[\delta_{n,j}=\lambda_{n,j}-\lambda_{n,j+1}\text{, } j \ge
1\text{.}\] Indeed, the results quoted above imply that
a.s. $\delta_{n,j} \rightarrow 0$, for $j \ge q_0$ whereas for $j <
q_0$, $\delta_{n,j}$ tends to a positive limit. Thus it becomes
possible to estimate $q_0$ from index-numbers $j$ where $\delta_{n,j}$
become small. More precisely, the estimator is
\begin{eqnarray} \hat{q}_n&=&\mbox{min}\{ j \in \{1,\dots,s\} :
\delta_{n,j+1} < d_n \}\text{,} \label{estimator}
\end{eqnarray} where $s >q_0$ is a fixed number big enough, and $d_n$
is a threshold to be defined. In practice, the integer $s$ should be
thought as a preliminary bound on the number of possible spikes. In
\cite{Passemier}, we proved the consistency of $\hat{q}_n$ providing
that the threshold satisfies $d_n \rightarrow 0$, $n^{2/3}d_n
\rightarrow +\infty$ and under the following assumption on the entries
of $\x$.

\begin{assumption} \label{hypothese} The entries $\mathsf{\x}^i$ of
the random vector $\mathsf{\x}$ have a symmetric law and a
sub-exponential decay, that means there exists positive constants $D$,
$D'$ such that, for all $t\ge D'$,
  $$\mathbb{P}(|\mathsf{\x}^i| \ge t^{D}) \le e^{-t}\text{.}$$
\end{assumption}

\subsection{Estimation with possibly equal spikes}
\label{equal}
As said in the Introduction, when some spikes are close each other,
estimation algorithms need to be modified. More precisely, we adopt
the following theoretic model with $K$ different spike strengths
$\alpha_1,\dots,\alpha_K$, each of them can appear $n_k$ times (equal
spikes), respectively. In other words,
\begin{eqnarray*} \mbox{spec}(\Sigma) &=& (
\underbrace{\alpha_1,\dots,\alpha_1}_{n_1},\dots,
\underbrace{\alpha_K,\dots,\alpha_K}_{n_K},\underbrace{0,\dots,0}_{p-q_0}
) + \sigma^2 (\underbrace{1,\dots,1}_p)\\ &=&\sigma^2(
\underbrace{\alpha_1',\dots,\alpha_1'}_{n_1},\dots,\underbrace{\alpha_K',\dots,\alpha_K'}_{n_K},\underbrace{1,\cdots,1}_{p-q_0}
) \text{.}
\end{eqnarray*} with $n_1+\dots+n_K=q_0$. When all the spikes are
unequal, differences between sample spike eigenvalues tend to a
positive constant, whereas with two equal spikes, such difference will
tend to zero. This fact creates an ambiguity with those differences
corresponding to the noise eigenvalues which also tend to
zero. However, the convergence of the $\delta_{n,i}$'s, for $i > q_0$
(noise) is faster (in $O_{\mathbb{P}} (n^{-2/3})$) than that of the
$\delta_{n,i}$ from equal spikes (in $O_{\mathbb{P}} ( n^{-1/2} )$) as
a consequence of Theorem 3.1 of Bai \& Yao \cite{Bai-Yao}. This is the
key feature we use to adapt the estimator (\ref{estimator}) to the
current situation with a new threshold $d_n$.  The precise asymptotic
consistency is as follows.

\begin{theorem} \label{consistence} Let $(\mathsf{x}_i)_{(1 \le i \le
n)}$ be $n$ copies i.i.d. of $\mathsf{x}$ which follows the model
(\ref{model}) and satisfies Assumption \ref{hypothese}. Suppose that
the population covariance matrix $\Sigma$ has $K$ non null and non
unit eigenvalues $\alpha_1>\dots>\alpha_{K}>\sigma^2\sqrt{c}$
with respective multiplicity $(n_k)_{1\le k \le K}$
($n_1+\dots+n_K=q_0$), and $p-q_0$ unit eigenvalues. Assume that
$\frac{p}{n} \rightarrow c >0$ when $n \rightarrow+\infty$. Let
$(d_n)_{n \ge 0}$ be a real sequence such that $d_n=o( n^{-1/2} )$ and
$n^{2/3}d_n \rightarrow +\infty$. Then the estimator $\hat{q}_n$ is
consistent; i.e $\widehat{q}_n \rightarrow q_0$ in probability when $n
\rightarrow +\infty$.
\end{theorem}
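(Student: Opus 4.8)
The plan is to prove that $\mathbb{P}(\widehat q_n = q_0) \to 1$, which for an integer-valued sequence is the same as convergence in probability to $q_0$. Set $m_k = n_1 + \dots + n_{k-1}$ (with $m_1 = 0$), so that the $k$-th block of equal population factors corresponds to the sample eigenvalues $\lambda_{n,m_k+1} \ge \dots \ge \lambda_{n,m_k+n_k}$, all of which converge almost surely to $\sigma^2\phi(\alpha_k')$ by the results recalled in Section~3.1. By the very definition of the estimator,
\begin{equation*}
\{\widehat q_n \ne q_0\} \;\subseteq\; \{\delta_{n,q_0+1} \ge d_n\} \;\cup\; \bigcup_{i=2}^{q_0}\{\delta_{n,i} < d_n\},
\end{equation*}
so it suffices to show that each of these finitely many events has probability tending to $0$. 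Call an index $i \in \{2,\dots,q_0\}$ \emph{between-block} if $\lambda_{n,i}$ and $\lambda_{n,i+1}$ are attached to different population factor strengths (this includes $i=q_0$, where $\lambda_{n,i+1}$ is the leading noise eigenvalue), and \emph{within-block} if both are attached to the same $\alpha_k$ with $n_k \ge 2$, in which case write $i = m_k+a$ with $1 \le a \le n_k-1$.

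For the stopping gap, since every spike is supercritical, both $\lambda_{n,q_0+1}$ and $\lambda_{n,q_0+2}$ fluctuate around $b=\sigma^2(1+\sqrt c)^2$ at the Tracy--Widom scale (edge behaviour under Assumption~\ref{hypothese}), hence $\delta_{n,q_0+1} = \lambda_{n,q_0+1}-\lambda_{n,q_0+2} = O_{\mathbb{P}}(n^{-2/3})$; combined with $n^{2/3}d_n \to +\infty$ this gives $\mathbb{P}(\delta_{n,q_0+1} \ge d_n) = \mathbb{P}(n^{2/3}\delta_{n,q_0+1} \ge n^{2/3}d_n) \to 0$. For a between-block index $i$, the almost-sure limits $\lambda_{n,j} \to \sigma^2\phi(\alpha_k')$ (for $j$ in block $k$) and $\lambda_{n,q_0+1} \to b$, together with the facts that $\phi$ is strictly increasing on $(1+\sqrt c,+\infty)$ and $\phi(1+\sqrt c)=(1+\sqrt c)^2$, show that $\delta_{n,i}$ converges almost surely to a strictly positive constant; since $d_n \to 0$, $\mathbb{P}(\delta_{n,i} < d_n) \to 0$.

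The delicate case is a within-block index $i = m_k+a$: here the almost-sure limit of $\delta_{n,i}$ is $0$, and the mere rate $O_{\mathbb{P}}(n^{-1/2})$ does not rule out $\delta_{n,i}$ dropping below the threshold — a matching lower bound is needed, and this is where Theorem~3.1 of Bai \& Yao enters. Because the $K$ blocks of factor eigenvalues, and the noise eigenvalues, are almost surely asymptotically separated, for all large $n$ the $n_k$ sample eigenvalues associated with $\alpha_k$ are precisely $\lambda_{n,m_k+1},\dots,\lambda_{n,m_k+n_k}$, and $\sqrt n\,(\lambda_{n,m_k+1}-\sigma^2\phi(\alpha_k'),\dots,\lambda_{n,m_k+n_k}-\sigma^2\phi(\alpha_k'))$ converges in distribution to the ordered eigenvalues $\mu_1^{(k)} \ge \dots \ge \mu_{n_k}^{(k)}$ of a Gaussian random matrix of size $n_k$. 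The centering cancels in the difference, so the continuous mapping theorem yields $\sqrt n\,\delta_{n,i} \Rightarrow G_i := \mu_a^{(k)}-\mu_{a+1}^{(k)}$, whose law has no atom at $0$ because the limiting matrix has almost surely simple spectrum, i.e.\ $\mathbb{P}(G_i>0)=1$. Hence, for any continuity point $\varepsilon>0$ of the law of $G_i$, using $\sqrt n\, d_n \to 0$ and the portmanteau lemma,
\begin{equation*}
\limsup_{n\to\infty}\mathbb{P}(\delta_{n,i} < d_n) \;\le\; \limsup_{n\to\infty}\mathbb{P}\big(\sqrt n\,\delta_{n,i} \le \varepsilon\big) \;=\; \mathbb{P}(G_i \le \varepsilon),
\end{equation*}
and letting $\varepsilon \downarrow 0$ gives $\mathbb{P}(\delta_{n,i} < d_n) \to 0$. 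Collecting the three cases over the finitely many indices involved yields $\mathbb{P}(\widehat q_n \ne q_0)\to 0$. I expect this within-block estimate to be the main obstacle: one must go beyond the order $O_{\mathbb{P}}(n^{-1/2})$ to the exact CLT of Bai \& Yao, check that the sample eigenvalues split into blocks matching the population (so the correct components are being subtracted), and exploit almost-sure simplicity of the limiting random-matrix spectrum to ensure $G_i$ puts all its mass on $(0,+\infty)$; the other two cases, and the role of the assumptions $d_n=o(n^{-1/2})$ and $n^{2/3}d_n\to+\infty$, are comparatively routine.
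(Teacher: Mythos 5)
Your proof is correct and follows essentially the same route as the paper: the same event decomposition, the $O_{\mathbb{P}}(n^{-2/3})$ edge fluctuation of the noise gap played against $n^{2/3}d_n\to+\infty$, positive-limit gaps between distinct factors played against $d_n\to 0$, and the Bai--Yao CLT with $d_n=o(n^{-1/2})$ for within-block gaps. Your explicit check that the limiting gap law puts no mass at zero (via a.s.\ simplicity of the spectrum of the limiting Gaussian matrix) is in fact slightly more careful than the paper's Lemma~1, which tacitly assumes the weak limit has no atom at the origin.
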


Compared to the previous situation, the only modification
of our estimator is a new condition $d_n=o( n^{-1/2} )$ on the
convergence rate of $d_n$. The proof of Theorem~1 is postponed to
Appendix.

There is a variation of the estimator defined as
follows.  Instead of making a decision once one difference $\delta_k$
is below the threshold $d_n$ (see (\ref{estimator})), one may decide
once two consecutive differences $\delta_k$ and $\delta_{k+1}$ are
both below $d_n$, i.e. define the estimator to be
  \begin{eqnarray} \hat{q}_n^*&=&\mbox{min}\{ j \in \{1,\dots,s\} :
\delta_{n,j+1} < d_n\text{ and }\delta_{n,j+2} < d_n\}
\text{.}\label{estimator2}
  \end{eqnarray} It can be easily checked that the proof for the
consistency of $\hat{q}_n$ applies equally to $\hat{q}_n^*$ under the
same conditions as in Theorem~\ref{consistence}.  This version of the
estimator will be used in all the simulation experiments
below. Intuitively, $\hat{q}_n^*$ should be more robust than
$\hat{q}_n$.  We notice that eventually more than two consecutive
differences could be used in \eqref{estimator2}. However, our
simulation experiments have shown that using more consecutive
differences does not improve significantly. So we limit ourselves the
simulation study to two consecutive differences.

\section{Implementation issues and overview of simulation experiments}
\label{simul}

The practical implementation of the estimator $\hat{q}_n^*$ depend on
two unknown parameters, namely the noise variance $\sigma^2$ and the
threshold sequence $d_n$.  As for an estimate of $\sigma^2$, we used
in \cite{Passemier} an algorithm based on the maximum likelihood
estimate
\[\widehat{\sigma}^2=\frac{1}{p-q_0} \sum_{i=q_0+1}^p
\lambda_{n,i}\text{.}\] As explained in \cite{Nadler1} and
\cite{Nadler2}, this estimator has a negative bias. Hence the authors
developed an improved estimator with a smaller bias. We will use this
improved estimator of the noise level when it is unknown.

It remains to choose a threshold sequence $d_n$.  As argued in
\cite{Passemier}, we use a sequence $d_n$ of the form
$Cn^{-2/3}\sqrt{2 \log \log n}$, where $C$ is a ``tuning" parameter to
be adjusted. In our Monte-Carlo experiments, we shall
consider two choices of $C$: the first one is manually tuned and used
to assess some theoretical properties of the estimator $\hat{q}_n^*$;
and the second one is a data-driven and automatically chosen value
that is used in real-life applications. This automatic choice is
introduced in Section~\ref{C}.

  In the remaining of the paper, we conduct extensive simulation
experiments to assess the quality of the estimator $\hat{q}_n^*$
including a detailed comparison with a benchmark detector from the
literature.

  In all experiments, data are generated with the assigned noise level
$\sigma^2=1$ and empirical values are calculated using 500 independent
replications. Table~\ref{tab:sum} gives a summary of the parameters
in the experiments. One should notice that both the given value of
$\sigma^2=1$ and the estimated one, as well as the manually tuned and
the automatic chosen values of $C$ are used in different scenarios.
There are in total three sets of experiments. The first set (Figures
1-2 and Models A, B), given in this section, illustrates the
convergence of our estimator in the situation of equal spikes. The
second set of experiments (Figures 3-4 and Models D-K) addresses the
performance of the automatic tuned $C$ and they are reported in
Section~\ref{C}.  The last set of experiments (Figures 5-6-7),
reported in Section~\ref{comp}, are designed for a comparison with the
benchmark detector KN.

\begin{table}[!ht]
  \caption{{\footnotesize Summary of parameters used in the simulation
experiments. (L: left, R: right)}}
  \begin{center} {\footnotesize
      \begin{tabular}{|c@{ }c@{ }c@{ }c@{ }c@{ }c@{ }c@{ }c|} \hline
Fig. & Mod. & spike & \multicolumn{4}{c}{Fixed parameters} & Var.\\
\cline{5-8} No. & No. & values & $p,n$ & $c$ & $\sigma^2$ & $C$ &
par.\\ \hline \hline \multirow{2}{*}{1} & &
\multirow{2}{*}{($\alpha$)} & $(200,800)$ & 1/4 &
\multirow{2}{*}{Given} & $5.5$ & \multirow{2}{*}{$\alpha$}\\ & & &
$(2000,500)$ & 4 & & 9 & \\ \hline \multirow{2}{*}{2} & A &
$(\alpha,\alpha,5)$ & $(200,800)$ & $1/4$ & \multirow{2}{*}{Given} &
$6$ & \multirow{2}{*}{$\alpha$}\\ & B & $(\alpha,\alpha,15)$ &
$(2000,500)$ & $4$ & & $9.9$ & \\ \hline \multirow{2}{*}{3L} & D &
$(6,5)$ & &\multirow{2}{*}{10} & \multirow{2}{*}{Given} &
\multirow{2}{*}{11 and auto} & \multirow{2}{*}{$n$}\\ & E & $(6,5,5)$
& & & & & \\ \hline \multirow{2}{*}{3R} & F & $(10,5)$ & &
\multirow{2}{*}{1} & \multirow{2}{*}{Given} & \multirow{2}{*}{5 and
auto} & \multirow{2}{*}{$n$}\\ & G &$(10,5,5)$ & & & & & \\ \hline

        \multirow{2}{*}{4L} & H & $(1.5)$ & &\multirow{2}{*}{1} &
\multirow{2}{*}{Given} & \multirow{2}{*}{5 and auto} &
\multirow{2}{*}{$n$}\\ & I & $(1.5,1.5)$ & & & & & \\ \hline
\multirow{2}{*}{4R} & J & $(2.5,1.5)$ & & \multirow{2}{*}{1} &
\multirow{2}{*}{Given} & \multirow{2}{*}{5 and auto} &
\multirow{2}{*}{$n$}\\ & K &$(2.5,1.5,1.5)$ & & & & & \\ \hline 5L & D
& $(6,5)$ & & 10 & Estimated & Auto & $n$\\ \hline 5R & J &
$(2.5,1.5)$ & & 1 & Estimated & Auto & $n$\\ \hline 6L & E & $(6,5,5)$
& & 10 & Estimated & Auto & $n$\\ \hline 6R & K & $(2.5,1.5,1.5)$ & &
1 & Estimated & Auto & $n$\\ \hline \multirow{2}{*}{7} &
\multirow{2}{*}{L} & \multirow{2}{*}{No spike} & & $1$ &
\multirow{2}{*}{Estimated} & \multirow{2}{*}{Auto} &
\multirow{2}{*}{$n$}\\ & & & & $10$ & & &\\ \hline
      \end{tabular}}
  \end{center}
  \label{tab:sum}
\end{table}

\subsection{Comparison between the case of equal spikes and different
spikes} In Figure \ref{fig:strength1}, we consider the case of a
single spike $\alpha$ and analyze the probability of
misestimation as a function of the value of  $\alpha$, for
$(p,n)=(200,800)$, $c=0.25$ and $(p,n)=(2000,500)$, $c=4$. We set
$C=5.5$ for the first case and $C=9$ for the second case (all manually
tuned). The noise
level $\sigma^2=1$ is given.
The estimator performs well: we recover the threshold from which the
behavior of the spike eigenvalues differ from the noise ones
($\sqrt{c}=0.5$ for the first case, and $2$ for the second).

\begin{figure}[!ht]
  \begin{center}
    \includegraphics{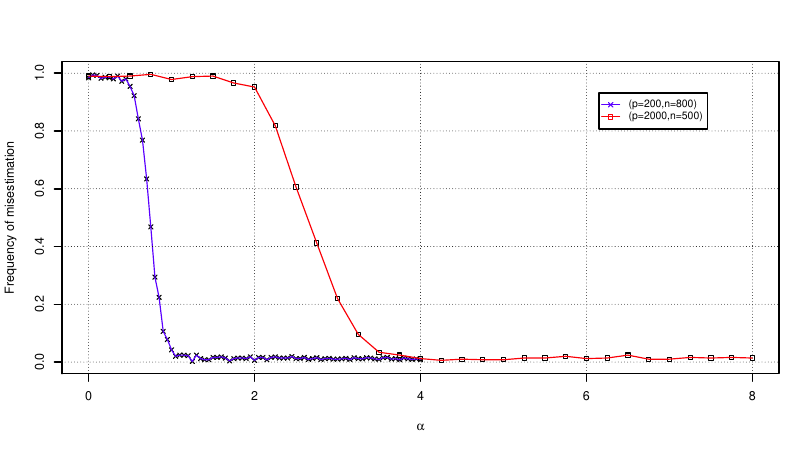}
  \end{center}
  \caption{{\footnotesize Misestimation rates as a function of spike
strength for $(p,n)=(200,800)$ and $(p,n)=(2000,500)$.}}
  \label{fig:strength1}
\end{figure}

Next we   keep the same parameters while adding some equal spikes. In
Figure \ref{fig:strength2}, we consider Model A:
$(\alpha_1,\alpha_2,\alpha_3)=(\alpha,\alpha,5)$, $0 \le \alpha \le
2.5$ and Model B: $(\alpha_1,\alpha_2,\alpha_3)=(\alpha,\alpha,15)$,
$0 \le \alpha \le 8$.  The dimensions are $(p,n)=(200,800)$ and $C=6$
for Model A, and $(p,n)=(2000,500)$ and $C=9.9$ for Model B.

\begin{figure}[!ht]
  \begin{center}
    \includegraphics{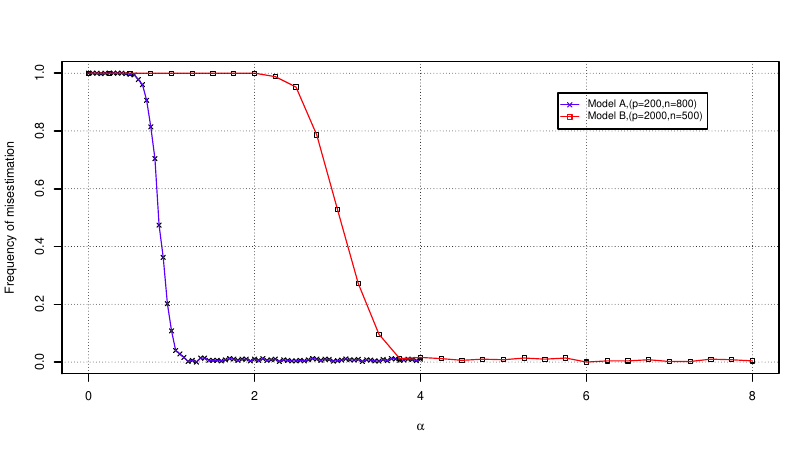}
  \end{center}
  \caption{{\footnotesize Misestimation rates as a function of spike
strength for $(p,n)=(200,800)$, Model A and $(p,n)=(2000,500)$, Model
B.}}
  \label{fig:strength2}
\end{figure}

There is no  particular difference with the previous situation:
when spikes are close 
or even equal, or near to the critical value, the estimator  remains
consistent although the convergence rate becomes slower. Overall,
these experiments demonstrate that the proposed estimator is able to
find the number of spikes.

\section{On the choice of $C$: an automatic calibration
procedure} \label{onc}
\label{C}

In the previous experiments, the tuning parameter $C$ has been
selected manually on a case by case basis. This is however untenable
in a real-life situation. We now provide an automatic calibration of
this parameter. The idea is to use the difference of the two largest
eigenvalues of a Wishart matrix (which correspond to the case of no
spike): indeed, our algorithm should stop once two consecutive
eigenvalues are below the threshold $d_n$ corresponding to a noise
eigenvalue. As we do not know precisely the distribution of the
difference between eigenvalues of a Wishart matrix, we approximate the
distribution of the difference between the two largest eigenvalues
$\tilde{\lambda}_1-\tilde{\lambda}_2$ by simulation under 500
independent replications. We then take the mean $s$ of the 10th and
the 11th largest spacings, so $s$ has the empirical probability
$\mathbb{P}(\tilde{\lambda}_1-\tilde{\lambda}_2 \le s)=0.98$: this
value will give reasonable results. We calculate a $\widetilde{C}$ by
multiplying this threshold by $n^{2/3}/\sqrt{2\times\log\log(n)}$. The
result for various $(p,n)$, with $c=1$ and $c=10$ are displayed in
Table \ref{tab:C}.

\begin{table}[!ht]
  \caption{{\footnotesize Approximation of the threshold $s$ such that
$\mathbb{P}(\tilde{\lambda}_1-\tilde{\lambda}_2 \le s)=0.98$.}}
  \begin{center} {\footnotesize
      \begin{tabular}{|c|c@{}c@{}c@{}|c@{}c@{}c@{}|} \hline (p,n) &
(200,200) & (400,400) & (600,600) & (2000,200) & (4000,400) &
(7000,700)\\ \hline Value of $s$ & 0.340 & 0.223 & 0.170 & 0.593 &
0.415 & 0.306 \\ \hline $\widetilde{C}$ & 6.367 & 6.398 & 6.277 & 11.106 &
11.906 & 12.44 \\ \hline
      \end{tabular}}
  \end{center}
  \label{tab:C}
\end{table} The values of $\widetilde{C}$ are quite close to the values we
would have chosen in similar settings (For instance, $C=5$ for $c=1$
and $C=9.9$ or 11 for $c=10$), although they are slightly
higher. Therefore, this automatic calibration of $\widetilde{C}$ can be
used in practice for any data and sample dimensions $p$ and $n$.

To assess the quality of the automatic calibration procedure, we run
some simulation experiments using both $\widetilde{C}$ and the manually
tuned $C$.  We consider in Figure \ref{fig:autoc1} the case
$c=10$. On the left panel we consider Model D ($\alpha=(6,5)$) and Model E
($\alpha=(6,5,5)$) (upper curve). On the right panel we have Model F
($\alpha=(10,5)$) and Model G ($\alpha=(10,5,5)$) (upper curve). The
dotted lines are the results with $C$ manually tuned.
\begin{figure}[!ht]
  \begin{center}
    \includegraphics{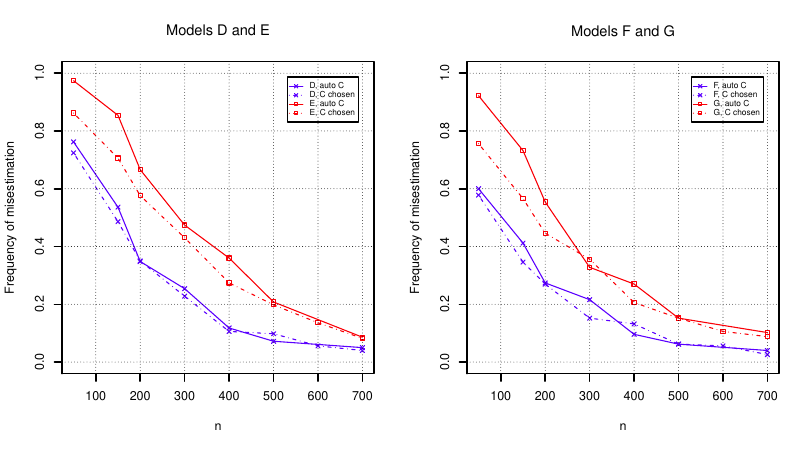}
  \end{center}
  \caption{{\footnotesize Misestimation rates as a function of $n$ for
Models D, E (left) and Models F, G (right).}}
  \label{fig:autoc1}
\end{figure}

Using the automatic value $\tilde C$ causes only a slight
deterioration of the estimation performance. We observe significantly
higher error rates in the case of equal spikes for moderate sample
sizes.

The case $c=1$ is considered in Figure \ref{fig:autoc2} with Models H
($\alpha=1.5$) and I ($\alpha=(1.5,1.5)$) (upper curve) on the left
and Model J ($\alpha=(2.5,1.5)$) and K ($\alpha=(2.5,1.5,1.5)$) (upper
curve) on the right.

\begin{figure}[!ht]
  \begin{center}
    \includegraphics{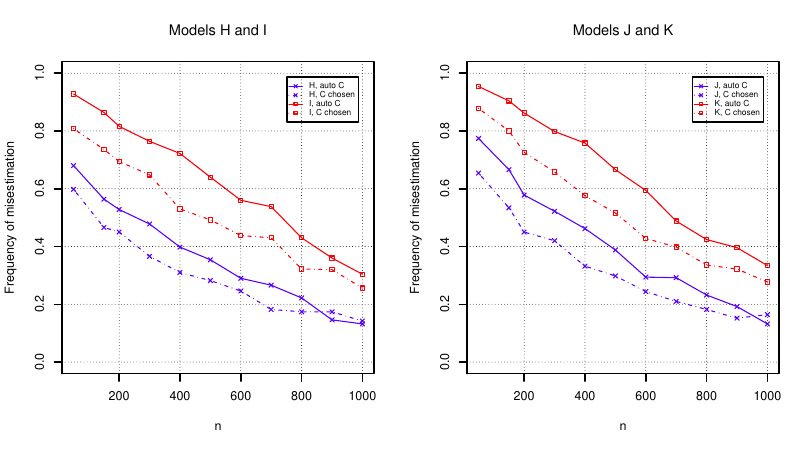}
  \end{center}
  \caption{{\footnotesize Misestimation rates as a function of $n$ for
Models H, I (left) and Models J, K (right).}}
  \label{fig:autoc2}
\end{figure}

Compared to the previous situation of $c=10$, using the automatic
value $\widetilde{C}$ affects a bit more our estimator (up to 20\% of
degradation). Nevertheless, the estimator remains
consistent. Furthermore, we have to keep in mind that our simulation
experiments have considered critical cases where spikes eigenvalues
are close: in many of practical applications, theses spikes are more
separated so that the influence of $C$ will be less important.

\section{Method of Kritchman \& Nadler and comparison}
\label{comp}
\subsection{Algorithm of Kritchman \& Nadler} In their papers
\cite{Nadler1} and \cite{Nadler2}, Kritchman \& Nadler develop a
different method to estimate the number of spikes. In this section we
compare by simulation our estimator (PY) with this method (KN). Notice
that these authors have compared their estimator KN with existing
estimators in the signal processing literature, based on the minimum
description length (MDL), Bayesian information criterion (BIC) and
Akaike information criterion (AIC) \cite{Wax}. In most of the studied
cases, the estimator KN performs better. Furthermore in
\cite{Nadler3}, this estimator is compared to an improved AIC
estimator and it still has a better performance. Thus we decide to
consider only this estimator KN for the comparison here.

In the absence of spikes, $n\mathsf{S}_n$ follows a Wishart
distribution with parameters $n,p$. In this case, Johnstone
\cite{Johnstone} has provided the asymptotic distribution of the
largest eigenvalue of $\mathsf{S}_n$.

\begin{proposition} Let $\mathsf{S}_n$ be the sample covariance matrix
of $n$ vectors distributed as $\mathcal{N}(0,\sigma^2 \mathsf{I}_p)$,
and $\lambda_{n,1} \ge \lambda_{n,2} \ge \dots \ge \lambda_{n,p}$ be
its eigenvalues. Then, when $n\rightarrow+\infty$, such that
$\frac{p}{n} \rightarrow c > 0$
  \[\mathbb{P} \left ( \frac{\lambda_{n,1}}{\sigma^2} <
\frac{\beta_{n,p}}{n^{2/3}}s+b \right ) \rightarrow F_1(s)\mbox{,
}s>0\] where $b=(1+\sqrt{c})^2$,
$\beta_{n,p}=\left(1+\sqrt{\frac{p}{n}}\right)\left(1+\sqrt{\frac{n}{p}}\right)^{\frac{1}{3}}$
and $F_1$ is the Tracy-Widom distribution of order 1.
\end{proposition}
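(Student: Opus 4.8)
The plan is to obtain this statement as a direct consequence of Johnstone's Tracy--Widom theorem for the largest eigenvalue of a white Wishart matrix, so that essentially the only work is to reconcile the two sets of centering and scaling constants. First I would observe that, since the $n$ sampled vectors are i.i.d. $\mathcal{N}(0,\sigma^2\mathsf{I}_p)$ and $\mathsf{S}_n=\frac1n\sum_{i=1}^n\mathsf{x}_i\mathsf{x}_i^*$ is formed without subtracting a sample mean, the matrix $n\mathsf{S}_n$ has a Wishart distribution $W_p(n,\sigma^2\mathsf{I}_p)$ with exactly $n$ degrees of freedom. Hence its eigenvalues are $\sigma^2$ times those of a standard white Wishart $W_p(n,\mathsf{I}_p)$, and writing $l_1$ for the largest eigenvalue of the latter we get $\lambda_{n,1}/\sigma^2=l_1/n$, so that $\mathbb{P}(\lambda_{n,1}/\sigma^2<x)=\mathbb{P}(l_1<nx)$ for every $x$.

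Next I would invoke Johnstone's theorem \cite{Johnstone}: when $p/n\to c>0$,
\[ \frac{l_1-\mu_{n,p}}{\tau_{n,p}}\xrightarrow{\ d\ }F_1,\qquad \mu_{n,p}=(\sqrt n+\sqrt p)^2,\quad \tau_{n,p}=(\sqrt n+\sqrt p)\Bigl(\tfrac1{\sqrt n}+\tfrac1{\sqrt p}\Bigr)^{1/3}. \]
A short algebraic manipulation (factoring $\sqrt n$ out of each parenthesis) shows that $\tau_{n,p}=n^{1/3}\beta_{n,p}$ exactly and $\mu_{n,p}=n\,b_{n,p}$ with $b_{n,p}=(1+\sqrt{p/n})^2\to b$. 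Substituting $x=\beta_{n,p}n^{-2/3}s+b$ gives $nx=\tau_{n,p}\,s+n\,b$, hence
\[ \mathbb{P}\!\left(\frac{\lambda_{n,1}}{\sigma^2}<\frac{\beta_{n,p}}{n^{2/3}}s+b\right)=\mathbb{P}\!\left(\frac{l_1-\mu_{n,p}}{\tau_{n,p}}<s+\frac{n\,(b-b_{n,p})}{\tau_{n,p}}\right), \]
and since $\tau_{n,p}$ is of exact order $n^{1/3}$ while $n(b-b_{n,p})\to0$ (or is identically $0$ if one reads $b$ as the finite-$n$ value $b_{n,p}$), Slutsky's lemma together with the continuity of the distribution function $F_1$ yields the claimed limit $F_1(s)$.

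I do not anticipate a genuine obstacle: the substance of the proposition — the emergence of the order-one Tracy--Widom law as the limiting distribution of the top eigenvalue of a Laguerre orthogonal ensemble — is precisely Johnstone's theorem, which is cited rather than reproved. The only points requiring care are (i) confirming that the correct degrees-of-freedom count is $n$, not $n-1$, which holds here because $\mathsf{S}_n$ is a raw second-moment matrix, and (ii) being precise about the centering: one should strictly take the finite-$n$ constant $b_{n,p}=(1+\sqrt{p/n})^2$, or else assume the ratio converges fast enough that $n^{2/3}(p/n-c)\to0$, so that replacing $b_{n,p}$ by its limit $b$ does not disturb the $n^{-2/3}$ Tracy--Widom scaling.
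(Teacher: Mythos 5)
Your proposal is correct and matches the paper's treatment: the paper does not prove this proposition at all but simply quotes it as Johnstone's theorem from \cite{Johnstone}, and your derivation is exactly the constant-matching bookkeeping ($\tau_{n,p}=n^{1/3}\beta_{n,p}$, $\mu_{n,p}=n\,b_{n,p}$, degrees of freedom $n$ since no sample mean is subtracted) needed to pass from Johnstone's statement to this one. Your caveat (ii) about replacing the finite-$n$ centering $b_{n,p}=(1+\sqrt{p/n})^2$ by the limit $b=(1+\sqrt{c})^2$ (harmless only if $n^{2/3}(p/n-c)\to 0$, or if $b$ is read as the finite-$n$ value, as Kritchman and Nadler effectively do) is a genuine subtlety that the paper glosses over, so it is right to flag it.
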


Assume the variance $\sigma^2$ is known. To distinguish a spike
eigenvalue $\lambda$ from a noise one at an asymptotic significance
level $\gamma$, their idea is to check whether
\begin{eqnarray} \label{test} \lambda_{n,k} > \sigma^2 \left (
\frac{\beta_{n,p-k}}{n^{2/3}}s(\gamma)+b \right )
\end{eqnarray} where $s(\gamma)$ verifies $F_1(s(\gamma))=1-\gamma$
and can be found by inverting the Tracy-Widom distribution. This
distribution has no explicit expression, but can be computed from a
solution of a second order Painlev\'e ordinary differential
equation. The estimator KN is based on a sequence of nested hypothesis
tests of the following form: for $k=1,2,\ldots,\mbox{min}(p,n)-1$,
\[\mathcal{H}_0^{(k)}\mbox{: } q_0 \le k-1 \quad vs.\quad
\mathcal{H}_1^{(k)}\mbox{: }q_0 \ge k \mbox{ .}\] For each value of
$k$, if (\ref{test}) is satisfied, $\mathcal{H}_0^{(k)}$ is rejected
and $k$ is increased by one. The procedure stops once an instance of
$\mathcal{H}_0^{(k)}$ is accepted and the number of spikes is then
estimated to be $\tilde{q}_{n}=k-1$. Formally, their estimator is
defined by
\[\tilde{q}_{n}=\underset{k}{\mbox{argmin}} \left ( \lambda_{n,k} <
\widehat{\sigma}^2 \left (\frac{\beta_{n,p-k}}{n^{2/3}}s(\gamma)+b
\right ) \right ) -1\text{.}\] Here $\widehat{\sigma}$ is some
estimator of the noise level (as discussed in Section \ref{simul}). The
authors proved the strong consistency of their estimator as $n
\rightarrow +\infty$ with fixed $p$, by replacing the fixed confidence
level $\gamma$ with a sample-size dependent one $\gamma_n$, where
$\gamma_n \rightarrow 0$ sufficiently slow as $n \rightarrow
+\infty$. They also proved that $\lim_{p,n \rightarrow +\infty}
\mathbb{P} \left ( \tilde{q}_{n} \ge q_0 \right ) = 1$.

It is important to notice here that the construction of the KN
estimator differs form ours, essentially because of the fixed alarm
rate $\gamma$. We will discuss the issue of the false alarm rate in
the last section.

\subsection{Comparison with our method} In order to follow a real-life
situation, we assume that $\sigma^2=1$ is unknown and we estimate it
for both methods. Furthermore, we use the automatic calibration
procedure to choose the constant $C$ in our method. We give a value of
$\gamma=0.5\%$ to the false alarm rate of the estimator KN, as
suggested in \cite{Nadler2} and use their algorithm available at the
author's homepage.

We consider in Figure \ref{fig:compKN1} Model D:
$(\alpha_1,\alpha_2)=(6,5)$ and and Model J:
$(\alpha_1,\alpha_2)=(2.5,1.5)$.

\begin{figure}[!ht]
  \begin{center}
    \includegraphics{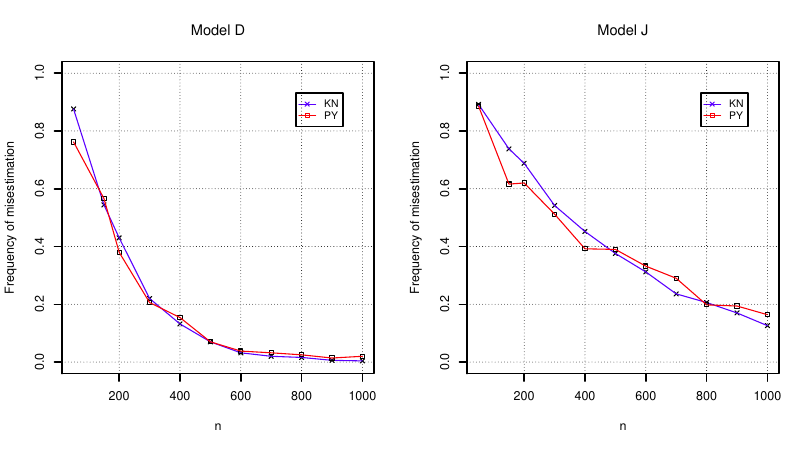}
  \end{center}
  \caption{{\footnotesize Misestimation rates as a function of $n$ for
Model D (left) and Model J (right).}}
  \label{fig:compKN1}
\end{figure}

For both Model D and J, the performances of the two estimator are
close. However the estimator PY is slightly better for moderate values
of $n$ ($n \le 400$) while the estimator KN has a slightly better
performance for larger $n$. The difference between the two estimators
are more important for Model J (up to 5\% improvement).

Next we consider in Figure \ref{fig:compKN2} Model E:
$(\alpha_1,\alpha_2,\alpha_2)=(6,5,5)$ and Model K:
$(\alpha_1,\alpha_2,\alpha_2)=(2.5,1.5,1.5)$, two models analogous to
Model D and J but with two equal spikes.

\begin{figure}[!ht]
  \begin{center}
    \includegraphics{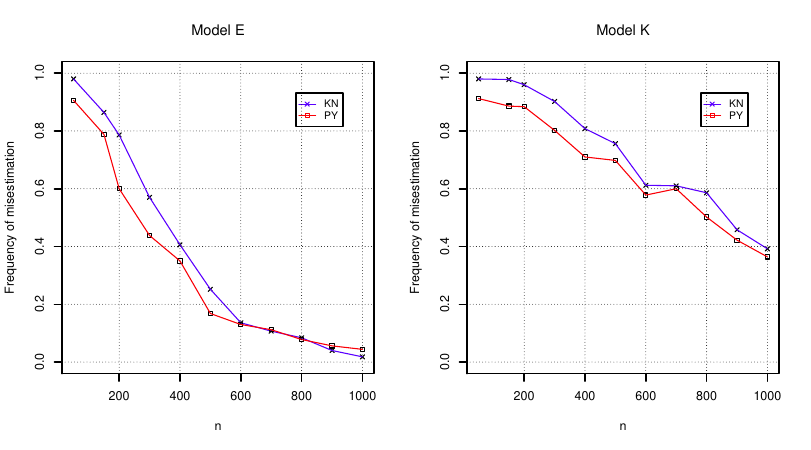}
  \end{center}
  \caption{{\footnotesize Misestimation rates as a function of $n$ for
Model E (left) and Model K (right).}}
  \label{fig:compKN2}
\end{figure}

For Model E, the estimator PY shows superior performance for $n \le
500$ (up to 20\% less error): adding an equal spike affects more the
performance of the estimator KN. The difference between the two
algorithms for Model K is higher than in the previous cases; the
estimator PY performs better in all cases, up to 10\%.
\newpage
In Figure \ref{fig:nospike} we examine a special case with no spike at
all (Model L). The estimation rates become the so-called false-alarm
rate, a concept widely used in signal processing literature. The cases
of $c=1$ and $c=10$ with $\sigma^2=1$ given are considered.

\begin{figure}[!ht]
  \begin{center}
    \includegraphics{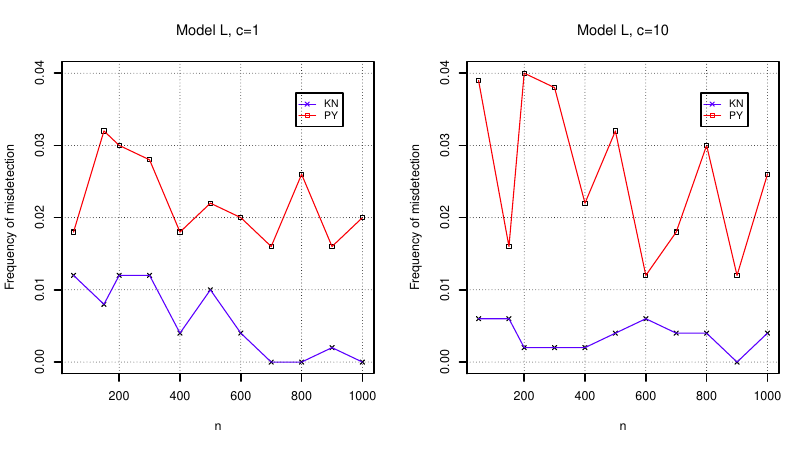}
  \end{center}
  \caption{{\footnotesize False-alarm rates as a function of $n$ for
$c=1$ (left) and $c=10$ (right).}}
  \label{fig:nospike}
\end{figure}

In both situations, the false-alarm rates of two estimators are quite
low (less than 4\%), and the KN one has a better performance.

In summary, in most of situations reported here, our algorithm
compares favorably to an existing benchmark method (the KN
estimator). It is also important to notice a fundamental difference
between these two estimators: the KN estimator is designed to keep the
false alarm rate at a very low level while our estimator attempts to
minimize an overall misestimation rate. We develop more in details
these issues in next section.

\subsection{Influence of $C$ on the misestimation and false alarm
rate} In the previous simulation experiments, we have chosen to report
the misestimation rates. However, to have a fair comparison to either
the KN estimator or any other method of the number of spikes, the
different methods should have comparable false alarm
probabilities. This section is devoted to an analysis of possible
relationship between the constant $C$ and the implied false alarm
rate.  Following \cite{Nadler2}, the false alarm rate $\gamma$ of such
an algorithm can be viewed as the type I error of the following test
\[\mathcal{H}_0\mbox{: } q_0 = 0 \quad vs.\quad \mathcal{H}_1\mbox{:
}q_0 > 0 \mbox{,}\] that is the probability of overestimation in the
white noise case. Recall the step $k$ of the algorithm KN tests
\[\mathcal{H}_0^{(k)}\mbox{: } q_0 \le k-1 \quad vs.\quad
\mathcal{H}_1^{(k)}\mbox{: }q_0 \ge k \mbox{.}\] In \cite{Nadler2},
the authors argue that their threshold is determined such that
\[\mathbb{P}(\mbox{reject } \mathcal{H}_0^{(k)} | \mathcal{H}_0^{(k)})
\approx \gamma\text{.}\] More precisely, they give an asymptotic bound
of the overestimation probability: they show that for $n=500$ and
$p>10$, this probability is close to $\gamma$.

Since for our method, we do not know explicitly the corresponding
false alarm rate, we recall in Table \ref{tab:far} the results from
Figure \ref{fig:nospike}, which were for the cases $c=1$ and $c=10$,
with the automatic value $\tilde C$, under 500 independent
replications.

\begin{table}[!ht]
  \caption{{\footnotesize False alarm rates in case of $c=1$ and
$c=10$.}}
  \begin{center} {\footnotesize
      \begin{tabular}{|c|c@{}c@{}c@{}c@{}|} \hline (p,n) & (150,150) &
(300,300) & (500,500) & (700,700) \\ \hline PY & 0.056 & 0.028 & 0.022
& 0.016 \\ KN & 0.008 & 0.002 & 0.001 & 0.000 \\ \hline \hline (p,n) &
(1500,150) & (3000,300) & (5000,500) & (7000,700) \\ \hline PY & 0.03
& 0.08 & 0.026 & 0.016 \\ KN & 0.004 & 0.006 & 0.004 & 0.006 \\ \hline
      \end{tabular}}
  \end{center}
  \label{tab:far}
\end{table}

As seen previously, the false alarm rates of our algorithm are much
higher than the false alarm rate $\gamma=0.5 \%$ of the KN
estimator. Nevertheless, and contrary to the KN estimator, the
overestimation rate of our estimator will be different from the false
alarm rate, and will depend on the number of spikes and their
values. Indeed, we use the gaps between two eigenvalues, instead of
each eigenvalue separately. Consequently, there is no justification to
claim that the probability $\mathbb{P}(\hat{q}_n > q | q=q_0)$, for
$q_0 >1$ will be close to $\mathbb{P}(\hat{q}_n > 0 | q=0)$. To
illustrate this phenomenon, we use the settings of Model E and K
($q_0=3$) and we evaluate the overestimation rate using 500
independent replications (note that the corresponding false alarm
rates are those in Table \ref{tab:far}). The results are displayed in
Table \ref{tab:over}.

\begin{table}[!ht]
  \caption{{\footnotesize Empirical overestimation rates from Model E
($\alpha=(6,5,5)$, $c=10$) and Model K ($\alpha=(2.5,1.5,1.5)$,
$c=1$).}}
  \begin{center} {\footnotesize
      \begin{tabular}{|c|c|cccc|} \cline{2-6} \multicolumn{1}{c|}{}
&(p,n) & (1500,150) & (3000,300) & (5000,500) & (7000,700) \\ \hline
\multirow{2}{*}{Model E} & PY & 0.004 & 0.022 & 0.018 & 0.01\\ & KN &
0.000 & 0.000 & 0.000 & 0.000\\ \hline \cline{2-6}
\multicolumn{1}{c|}{}&(p,n) & (150,150) & (300,300) & (500,500) &
(700,700) \\ \hline \multirow{2}{*}{Model K} & PY & 0.006 & 0.002 &
0.002 & 0.006 \\ & KN & 0.000 & 0.000 & 0.000 & 0.000 \\ \hline
      \end{tabular}}
  \end{center}
  \label{tab:over}
\end{table}

We observe that these overestimation rates are lower than the false
alarm rates given in Table 2: this suggests that no obvious
relationship exists between the false alarm rate $\gamma$ and the
overestimation rates for our algorithm.

Furthermore, the overestimation rates of the KN estimator are $0$ in
all cases: it means that misestimation is mostly underestimated.

In summary, if the goal is to keep overestimation rates at a constant
and low level, one should employ the KN estimator without hesitation
(since by construction, the probability of overestimation is kept to a
very low level). Otherwise, if the goal is also to minimize the
overall misestimation rates i.e. including underestimation errors, our
algorithm can be a good substitute to the KN estimator. One could
think of choosing $C$ in each case to have a probability of
overestimation kept fixed at a low level, but in this case the
probability of underestimation would be high and the performance of
the estimation would be poor: our estimator is constructed to minimize
the overall misestimation rate.

\section{Concluding remarks}
\label{concl}

In this paper we have considered the problem of the estimation of the
number of spikes in high-dimensional data. When some spikes have close
or even equal values, the estimation becomes harder and existing
algorithm need to be re-examined or corrected. In this spirit, we have
proposed a new version of our previous algorithm. Its asymptotic
consistency is established. We compare our algorithm to an existing
competitor proposed by Kritchman \& Nadler (KN, \cite{Nadler1},
\cite{Nadler2}). From our extensive simulation experiments in various
scenarios, overall our estimator can have smaller misestimation rates,
especially in cases with close and relatively low spike values (Figure
6) or more generally for almost all the cases provided that the sample
size $n$ is moderately large ($n\le 400$ or 500). Nevertheless, if the
primary aim is to fix the false alarm rate and the overestimation
rates at a very low level, the KN estimator is preferable.

Our algorithm depends on a tuning parameter $C$. By
comparison, the KN estimator is remarkably robust and a single value
of $\gamma=0.5\%$ was used in all the experiments. In Section
5 we have provided a first approach to an automatic calibration of $C$
which is quite satisfactory. More investigation is needed in the
future for this calibration in order to further improve the
performance of our estimator.

\noindent{\bf Acknowledgment}\quad The authors are grateful to two
Referees and the Associate Editor for their helpful comments that have
led to many improvements of the paper.

\bibliographystyle{alpha} 
\bibliography{ref}

\appendix
\section{Proof of Theorem~\ref{consistence}}
Without loss of generality we will assume that
$\sigma^2=1$ (if it is not the case, we consider
$\frac{\lambda_{n,j}}{\sigma^2}$). For the proof, we need the
following Propositions 2 and 3 from the literature. Proposition
\ref{Yao} is a result of Bai and Yao \cite{Bai-Yao} which derives from
a CLT for the $n_k$-packed eigenvalues
\[\sqrt{n}[\lambda_{n,j} - \phi(\alpha_k')]\mbox{, }j\in J_k\] where
$J_k=\{s_{k-1}+1,\dots,s_k\}$, $s_i=n_1+\cdots+n_i$ for $1\le i\le K$.
\begin{proposition} \label{Yao} Assume that the entries $\mathsf{x}^i$
of $\mathsf{x}$ satisfy $\mathbb{E}(\|\mathsf{x}^i\|^4) < +\infty$,
$\alpha_j'~>~1+\sqrt{c}$ for all $1 \le j \le K$ and have multiplicity
$n_1,\dots,n_K$ respectively. Then as $p$, $n \rightarrow +\infty$ so
that $\frac{p}{n} \rightarrow c$, the $n_k$-dimensional real vector
  \[\sqrt{n}\{\lambda_{n,j} - \phi(\alpha_k')\mbox{, }j \in J_k \}\]
converges weakly to the distribution of the $n_k$ eigenvalues of a
Gaussian random matrix whose covariance depends on $\alpha_k'$ and
$c$.
\end{proposition}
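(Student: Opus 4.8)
The plan is to treat the $n_k$-packed spiked eigenvalues exactly as in the single-spike argument of Baik--Silverstein and Paul, but carried out simultaneously for a whole cluster of equal population eigenvalues. Write $\Sigma = \sigma^2\I_p + \mathsf{A}\mathsf{A}^* = \I_p + \mathsf{A}\mathsf{A}^*$ (recall $\sigma^2 = 1$), and diagonalize so that the spiked part is $\mathrm{diag}(\alpha_1',\dots,\alpha_1',\dots)$ with $\alpha_k'$ of multiplicity $n_k$. First I would isolate the $k$-th cluster: fix $k$ and let $P_k$ be the $p\times n_k$ matrix of population eigenvectors attached to $\alpha_k'$. Using the standard decoupling identity for sample covariance matrices, an eigenvalue $\lambda_{n,j}$ with $j\in J_k$ sitting outside the support $[(1-\sqrt c)^2,(1+\sqrt c)^2]$ of the Mar\v cenko--Pastur law (which happens a.s. for all large $n$, since $\alpha_k' > 1+\sqrt c$ forces $\phi(\alpha_k')$ to lie strictly to the right of the bulk) must satisfy the determinantal equation
\[
\det\!\Bigl( (\alpha_k')^{-1}\I_{n_k} - \tfrac1n P_k^* X^* \bigl(\tfrac1n \widetilde X \widetilde X^* - \lambda_{n,j}\I\bigr)^{-1} X P_k \Bigr) = 0,
\]
where $X$ is the Gaussian data matrix and $\widetilde X$ denotes the part of the data orthogonal to the spike directions. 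Thus the $n_k$ packed eigenvalues near $\phi(\alpha_k')$ are, up to the change of variable $\lambda \mapsto \phi^{-1}$-type inversion, the eigenvalues of the $n_k\times n_k$ random matrix $M_n := \tfrac1n P_k^* X^* R_n(\lambda) X P_k$ evaluated at the relevant resolvent argument.

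Second, I would establish a CLT for this $n_k\times n_k$ matrix. Freezing $\lambda$ at its limit $\phi(\alpha_k')$ and using Gaussianity of $X$, the matrix $\tfrac1n P_k^* X^* R_n X P_k$ has mean converging to a deterministic multiple of $\I_{n_k}$ (governed by the Stieltjes transform of the MP law at $\phi(\alpha_k')$, which is exactly what makes $(\alpha_k')^{-1}$ a root) and fluctuations of order $n^{-1/2}$. A direct computation of the covariance of the entries $\frac{1}{\sqrt n}$-rescaled --- using the independence across the $p-q_0$ bulk directions and the Gaussian fourth-moment formula --- shows $\sqrt n\,(M_n - \mathbb{E} M_n)$ converges to an $n_k\times n_k$ GUE-type (real-symmetric, since $\x$ is real) Gaussian matrix whose variance profile is a function of $\alpha_k'$ and $c$ only. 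Here the moment condition $\mathbb{E}\|\x^i\|^4 < \infty$ is what is needed to push the CLT for quadratic forms through when the data are not exactly Gaussian (via a Lindeberg replacement / martingale-difference CLT for $\tfrac1n\sum_t (P_k^* x_t)(P_k^* x_t)^*$-type sums against the resolvent). Then I would linearize: since $\lambda \mapsto \det(\cdots)$ is smooth and has a simple structure near $\phi(\alpha_k')$, the implicit function theorem / delta method converts the CLT for $M_n$ into a CLT for $\sqrt n\{\lambda_{n,j} - \phi(\alpha_k')\}$, $j\in J_k$, the limiting law being that of the $n_k$ eigenvalues of a Gaussian symmetric matrix with $\alpha_k',c$-dependent covariance.

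Third, I would justify the localization and separation claims: that exactly $n_k$ sample eigenvalues are trapped in a shrinking neighborhood of $\phi(\alpha_k')$, that they are separated from the bulk and from the other clusters $\phi(\alpha_{k'}')$ (clear since the $\alpha_k'$ are distinct, hence the $\phi(\alpha_k')$ are distinct by monotonicity of $\phi$ on $(1+\sqrt c,\infty)$), and that no eigenvalue leaks out. This uses the a.s.\ no-eigenvalue-outside-the-support results for the bulk plus the rank-$q_0$ perturbation interlacing, and upgrades the determinantal characterization above from "a root exists" to "exactly $n_k$ roots, counted with multiplicity."

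\emph{Main obstacle.} The delicate part is the fluctuation analysis of the $n_k$-dimensional quadratic form $\tfrac1n P_k^* X^* R_n(\lambda) X P_k$ when the resolvent argument $\lambda$ is itself random and coupled to $X$; one must show that replacing the random $\lambda_{n,j}$ by the deterministic $\phi(\alpha_k')$ inside $R_n$ costs only $o_{\mathbb{P}}(n^{-1/2})$, which requires simultaneous control of the resolvent's smoothness (a local law / rigidity-type estimate on the edge of the bulk) and of the eigenvalue's own fluctuation scale. Once this decoupling is secured, the remaining computation of the limiting covariance is a routine Gaussian (or fourth-moment) calculation. I would expect to defer the detailed covariance formula to the cited Bai--Yao reference \cite{Bai-Yao} and concentrate the writeup on the reduction to the $n_k\times n_k$ matrix CLT and the localization step.
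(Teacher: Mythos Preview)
The paper does not prove this proposition at all: it is quoted verbatim as ``a result of Bai and Yao \cite{Bai-Yao}'' and used as a black box in the proof of Theorem~\ref{consistence}. There is therefore nothing in the present paper to compare your argument against; the proof lives entirely in the cited reference.

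That said, your sketch is a reasonable outline of how results of this type are actually proved, and is broadly in the spirit of \cite{Bai-Yao}: isolate the spike block, characterize the extremal sample eigenvalues as roots of a low-dimensional determinantal equation built from the resolvent of the ``bulk'' part, establish a CLT for the resulting $n_k\times n_k$ random matrix via a CLT for random quadratic/sesquilinear forms under a fourth-moment assumption, and then transfer this to the eigenvalues by linearization. Your identification of the main technical point --- controlling the error incurred when replacing the random argument $\lambda_{n,j}$ in the resolvent by its deterministic limit $\phi(\alpha_k')$ --- is correct; in \cite{Bai-Yao} this is handled by first proving almost-sure convergence $\lambda_{n,j}\to\phi(\alpha_k')$ (the Baik--Silverstein result) and then exploiting smoothness of the Stieltjes transform away from the bulk support, rather than invoking any edge rigidity. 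One small inaccuracy: you wrote ``GUE-type'' but since the entries are real the limiting Gaussian matrix is real symmetric (GOE-type), as you then parenthetically note. For the purposes of the present paper, however, none of this detail is needed --- all that is used downstream is the mere fact that $\sqrt{n}\,\delta_{n,j}$ converges weakly for $j$ inside a multiplicity block, which feeds Lemma~\ref{lemma1}.
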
 Proposition \ref{Maida} is a direct consequence of
Proposition 5.8 from Benaych-Georges, Guionnet and Maida \cite{Maida}:
\begin{proposition} \label{Maida} Assume that the entries
$\mathsf{x}^i$ of $\mathsf{x}$ have a symmetric law and a
sub-exponential decay, that means there exists positive constants D,
D' such that, for all $t\ge \mbox{D'}$, we have
$\mathbb{P}(|\mathsf{x}^i|~\ge~t^{D})~\le~e^{-t}$. Then, for all $1
\le i \le L$ with a prefixed range $L$,
  \[n^{\frac{2}{3}}(\lambda_{n,q_0+i}-b) = O_{\mathbb{P}}(1)\text{.}\]
\end{proposition}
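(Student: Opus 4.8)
The plan is to reduce the edge eigenvalues of $\mathsf{S}_n$ to those of a white Wishart matrix by a compression argument, and then to quote the known edge behaviour of the latter. Since $\sigma^2=1$ throughout the Appendix, introduce the sample covariance matrix of the noise vectors alone,
\[
\mathsf{B}_n=\frac1n\sum_{i=1}^{n}n(t_i)n(t_i)^*,
\]
a real white Wishart matrix with $p/n\to c$; write $\mu_{n,1}\ge\cdots\ge\mu_{n,p}$ for its eigenvalues. By the above Proposition of Johnstone (applied with $\sigma^2=1$), together with the standard extension to the joint limiting law of the top finitely many eigenvalues of a real white Wishart ensemble, for every fixed $m$ one has $n^{2/3}(\mu_{n,m}-b)=O_{\mathbb P}(1)$ with $b=(1+\sqrt c)^2$; in particular this holds for all $m\le 2q_0+L$.

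The key observation is that, after projecting away from the column space of $\mathsf{A}$, the matrices $\mathsf{S}_n$ and $\mathsf{B}_n$ coincide. Let $\mathsf{P}$ be the orthogonal projection of $\R^p$ onto $(\mathrm{range}\,\mathsf{A})^\perp$, a subspace of dimension $p-q_0$. Since $\mathsf{x}_i=\mathsf{A}f(t_i)+n(t_i)$ and $\mathsf{P}\mathsf{A}=0$, we get $\mathsf{P}\mathsf{x}_i=\mathsf{P}n(t_i)$ for every $i$, hence
\[
\mathsf{P}\mathsf{S}_n\mathsf{P}=\frac1n\sum_{i=1}^n(\mathsf{P}\mathsf{x}_i)(\mathsf{P}\mathsf{x}_i)^*=\frac1n\sum_{i=1}^n(\mathsf{P}n(t_i))(\mathsf{P}n(t_i))^*=\mathsf{P}\mathsf{B}_n\mathsf{P}.
\]
Let $\nu_{n,1}\ge\cdots\ge\nu_{n,p-q_0}$ be the eigenvalues of this common $(p-q_0)\times(p-q_0)$ compression. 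Cauchy interlacing applied to the compression of $\mathsf{S}_n$ gives $\lambda_{n,k}\ge\nu_{n,k}\ge\lambda_{n,k+q_0}$, and applied to the compression of $\mathsf{B}_n$ gives $\mu_{n,k}\ge\nu_{n,k}\ge\mu_{n,k+q_0}$, for all admissible $k$. Combining these (use $k=i$ for the upper side and $k=q_0+i$ for the lower side, and the $\mathsf{B}_n$-chain to sandwich $\nu_{n,i}$ and $\nu_{n,q_0+i}$), I obtain, for $1\le i\le L$ and $n$ large enough that $2q_0+i\le p$,
\[
\mu_{n,\,2q_0+i}\ \le\ \lambda_{n,\,q_0+i}\ \le\ \mu_{n,\,i}.
\]

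It then remains to subtract $b$, multiply by $n^{2/3}$, and bound both ends by $O_{\mathbb P}(1)$ using the first paragraph; a squeeze yields $n^{2/3}(\lambda_{n,q_0+i}-b)=O_{\mathbb P}(1)$, which is the assertion. (Alternatively, one may simply cite Proposition~5.8 of \cite{Maida}, as already indicated in the text.) I expect the only step that is not pure linear algebra to be the edge localization of the fixed-index top eigenvalues of the white Wishart matrix $\mathsf{B}_n$: the upper bound $n^{2/3}(\mu_{n,m}-b)\le n^{2/3}(\mu_{n,1}-b)=O_{\mathbb P}(1)$ is immediate from Johnstone's theorem, whereas the matching lower bound $n^{2/3}(\mu_{n,m}-b)\ge-O_{\mathbb P}(1)$ needs the joint Tracy-Widom limit for the top $m$ eigenvalues, which is classical. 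Everything else reduces to the exact identity $\mathsf{P}\mathsf{S}_n\mathsf{P}=\mathsf{P}\mathsf{B}_n\mathsf{P}$ and Cauchy interlacing, and it is precisely this identity that makes transparent why the ``noise'' eigenvalues $\lambda_{n,q_0+i}$ of $\mathsf{S}_n$ fluctuate on the same $n^{-2/3}$ scale, and around the same edge $b$, as those of the pure-noise matrix $\mathsf{B}_n$.
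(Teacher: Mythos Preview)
The paper does not give its own proof of this proposition: it is simply quoted from Proposition~5.8 of \cite{Maida} (Benaych-Georges, Guionnet, Ma\"ida), and is used as a black box in the proof of Theorem~\ref{consistence}. Your argument is therefore not a reproduction of the paper's reasoning but a genuine alternative.

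Your compression-plus-interlacing route is correct for the paper's model~(\ref{model}), where the noise $n(t)$ is Gaussian. The identity $\mathsf{P}\mathsf{S}_n\mathsf{P}=\mathsf{P}\mathsf{B}_n\mathsf{P}$ is exact, the Cauchy interlacing chain $\mu_{n,2q_0+i}\le\lambda_{n,q_0+i}\le\mu_{n,i}$ is derived correctly, and the $O_{\mathbb P}(1)$ control of $n^{2/3}(\mu_{n,m}-b)$ for each fixed $m$ is indeed classical for real white Wishart matrices (joint Tracy--Widom limit at the soft edge). What your approach buys is transparency: it makes completely explicit \emph{why} the noise eigenvalues of $\mathsf{S}_n$ inherit the $n^{-2/3}$ edge scale of a pure Wishart, at the cost of only linear algebra plus Johnstone's theorem. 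What the citation of \cite{Maida} buys is generality: their Proposition~5.8 covers finite-rank deformations of matrices whose entries are merely symmetric with sub-exponential tails, not necessarily Gaussian, which is exactly the hypothesis stated here. Your argument, as written, uses Johnstone's result and hence the Gaussianity of $n(t)$; to match the stated hypotheses in full you would need Tracy--Widom universality for the top finitely many eigenvalues of $\mathsf{B}_n$ under the symmetric sub-exponential assumption, which is also known but is a substantially deeper input than Johnstone. For the purposes of this paper (where $n(t)\sim\mathcal N(0,\mathsf{I}_p)$ throughout), your proof is entirely adequate and arguably preferable for its self-containedness.
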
 We also need the following lemma:
\begin{lemma} \label{lemma1} Let $(\mathsf{X}_n)_{n \ge 0}$ be a
sequence of positive random variables which weakly converges to a
probability distribution with a continuous cumulative distribution
function. Then for all real sequence $(u_n)_{n \ge 0}$ which converges
to 0,
  \[\mathbb{P} (\mathsf{X}_n \le u_n) \rightarrow 0\text{.}\]
\end{lemma}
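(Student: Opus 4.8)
The plan is to turn the $n$-dependent threshold $u_n$ into a single fixed threshold and then quote the definition of weak convergence at a continuity point. Write $X$ for a random variable with the limiting law of $(\mathsf{X}_n)$ and let $F$, $F_n$ denote the cumulative distribution functions of $X$ and $\mathsf{X}_n$ respectively. Since the $\mathsf{X}_n$ are positive and the limiting variable is positive as well, $F(0)=\mathbb{P}(X\le 0)=0$, and by right-continuity of $F$ at the origin we have $F(t)\downarrow 0$ as $t\downarrow 0$.

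First I would fix $\varepsilon>0$. Using that $F(t)\to 0$ as $t\downarrow 0$ together with the fact that the discontinuity set of $F$ is at most countable (so continuity points of $F$ are dense in $(0,\infty)$), I pick a continuity point $\delta>0$ of $F$ with $F(\delta)\le\varepsilon$. Because $u_n\to 0$, there is $N$ such that $u_n\le\delta$ for all $n\ge N$, and since each $\mathsf{X}_n$ is positive this gives, for $n\ge N$,
\[
\mathbb{P}(\mathsf{X}_n\le u_n)\ \le\ \mathbb{P}(\mathsf{X}_n\le\delta)\ =\ F_n(\delta).
\]
By weak convergence evaluated at the continuity point $\delta$, $F_n(\delta)\to F(\delta)$, hence $\limsup_{n\to\infty}\mathbb{P}(\mathsf{X}_n\le u_n)\le F(\delta)\le\varepsilon$. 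As $\varepsilon>0$ is arbitrary and the probabilities are nonnegative, $\mathbb{P}(\mathsf{X}_n\le u_n)\to 0$.

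The only delicate point — which I regard as the crux of the argument — is the passage from the moving threshold $u_n$ to a fixed value $\delta$ that is simultaneously a continuity point of $F$ and small enough to make $F(\delta)$ negligible; this is exactly where the positivity of the limiting variable is needed, through $F(0^+)=0$ (without it the statement fails, as the deterministic example $\mathsf{X}_n=u_n=1/n$ shows). Everything else is the routine continuity-point (Portmanteau) reasoning and requires no computation.
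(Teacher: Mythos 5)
Your proof is correct and takes essentially the same route as the paper's: bound the moving threshold $u_n$ by a fixed small value, invoke weak convergence of $\mathsf{X}_n$ at that value, and then let the fixed value shrink to $0$ using $F(0^+)=0$. If anything your write-up is more careful than the paper's, since you evaluate $F_n(\delta)\to F(\delta)$ only at a continuity point $\delta$ of $F$ (the paper tacitly assumes convergence at every $v\ge 0$), while sharing with the paper the same implicit assumption that the limiting law puts no mass at $0$ — which holds in the application, where the limit is the (continuous) law of eigenvalue spacings of a Gaussian matrix.
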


\begin{proof} As $(\mathsf{X}_n)_{n \ge 0}$ converges weakly, there
exists a function $G$ such that, for all $v\ge0$,
$\mathbb{P}(\mathsf{X}_n~\le~v) \rightarrow G(v)$.  Furthermore, as
$u_n \rightarrow 0$, there exists $N \in \mathbb{N}$ such that for all
$n~\ge~N$, $u_n~\le~v$. So $\mathbb{P}(\mathsf{X}_n \le u_n) \le
\mathbb{P}(\mathsf{X}_n \le v)$, and $\varlimsup\limits_{n \to
+\infty} \mathbb{P}(\mathsf{X}_n \le u_n) \le \varlimsup\limits_{n \to
+\infty} \mathbb{P}(\mathsf{X}_n~\le~v)=G(v)$. Now we can take $v
\rightarrow 0$: as $(\mathsf{X}_n)_{n \ge 0}$ is positive, $G(v)
\rightarrow 0$. Consequently, $\mathbb{P} (\mathsf{X}_n~\le~u_n)
\rightarrow~0\text{.}$
\end{proof}

\begin{proof} of Theorem \ref{consistence}.  The proof is essentially
the same as Theorem 3.1 in \cite{Passemier}, except when the spikes
are equal. We have
  \begin{align*} \{ \hat{q}_n = q_0 \}& = \{ q_0=\mbox{min}\{j :
\delta_{n,j+1} < d_n\} \} \\ & = \{ \forall j \in
\{1,\dots,q_0\}\mbox{, }\delta_{n,j} \ge d_n \} \cap
\{\delta_{n,q_0+1} < d_n\}\text{.}
  \end{align*} Therefore
  \begin{align*} \mathbb{P}(\hat{q}_n = q_0) & = \mathbb{P}\left (
\bigcap_{1\le j \le q_0} \{ \delta_{n,j} \ge d_n \} \cap
\{\delta_{n,q_0+1} < d_n\} \right )\\ & = 1 - \mathbb{P}\left (
\bigcup_{1\le j \le q_0} \{ \delta_{n,j} < d_n \} \cup
\{\delta_{n,q_0+1} \ge d_n\} \right )\\ & \ge 1 - \sum_{j=1}^{q_0}
\mathbb{P}(\delta_{n,j} < d_n)-\mathbb{P}(\delta_{n,q_0+1} \ge d_n )
\text{.}
  \end{align*}

  \noindent \emph{Case of $j=q_0+1$}.  In this case, $\delta_{n,q_0+1}
= \lambda_{n,q_0+1}-\lambda_{n,q_0+2}$ (noise eigenvalues). As $d_n
\rightarrow 0$ such that, $n^{2/3}d_n \rightarrow +\infty$, and by
using Proposition \ref{Maida} in the same manner as in the proof of
Theorem 3.1 in \cite{Passemier}, we have
  \[\mathbb{P}(\delta_{n,q_0+1} \ge d_n) \rightarrow 0\text{.}\]

  \vspace{0.2cm}
  \noindent \emph{Case of $1 \le j \le q_0$}. These indexes correspond
to the spike eigenvalues.

  \begin{list}{\labelitemi}{\leftmargin=1em}
  \item Let $I_1=\{1\le l \le q_0 | \mbox{card}(J_l)=1\}$ (simple
spike) and $I_2=\{l-1| l\in I_1 \text{ and } l-1 >1\}$. For all $j \in
I_1\cup I_2$, $\delta_{n,j}$ corresponds to a consecutive difference
of $\lambda_{n,j}$ issued from two different spikes, so we can still
use Proposition 3 and the proof of Theorem 3.1 in \cite{Passemier} to
show that
    \[\mathbb{P}( \delta_{n,j} < d_n) \rightarrow 0\mbox{, }\forall j
\in I_1\text{.}\]

  \item Let $I_3=\{1\le l \le q_0-1 | l \notin (I_1\cup I_2)\}$. For
all $j \in I_3$, it exists $k \in \{ 1,\dots,K \}$ such that $j \in
J_k$.
    \begin{list}{\labelitemi}{\leftmargin=2em}
    \item If $j+1 \in J_k$ then, by Proposition 2, $\mathsf{X}_n =
\sqrt{n}\delta_{n,j}$ converges weakly to a limit which has a density
function on $\R^+$. So by using Lemma \ref{lemma1} and that
$d_n=o(n^{-1/2})$, we have
      \[\mathbb{P} \left ( \delta_{n,j} < d_n \right ) = \mathbb{P}
\left ( \sqrt{n}\delta_{n,j} < \sqrt{n} d_n \right ) \rightarrow
0\text{;}\]
    \item Otherwise, $j+1 \notin J_k$, so $\alpha_j \ne
\alpha_{j+1}$. Consequently, as previously, $\delta_{n,j}$ corresponds
to a consecutive difference of $\lambda_{n,j}$ issued from two
different spikes, so we can still use Proposition 2 and the proof of
Theorem 3.1 in \cite{Passemier} to show that
      \[\mathbb{P}( \delta_{n,j} < d_n) \rightarrow 0\text{.}\]
    \end{list}
  \item The case of $j=q_0$ is considered as in \cite{Passemier}.
  \end{list} \vspace{0.2cm}

  \noindent \emph{Conclusion}.  $\mathbb{P}(\delta_{n,q_0+1} \ge d_n)
\rightarrow 0$ and $\sum_{j=1}^{q_0} \mathbb{P}( \delta_{n,j} < d_n)
\rightarrow 0$, therefore

  \[\mathbb{P}(\hat{q}_n = q_0) \underset{n \rightarrow
+\infty}{\longrightarrow}1\text{.}\]
\end{proof}

\end{document}